\documentclass[12pt]{article}

\usepackage{latexsym,amsmath,amscd,amssymb,graphics,float}
\usepackage{enumerate}

\usepackage{graphicx,lscape}

\usepackage[colorlinks]{hyperref}
\usepackage{url}

\textwidth15.8 cm
\topmargin -1.4 cm \textheight22.5 cm \oddsidemargin.4cm
\evensidemargin.4cm


\begin{document}

\newenvironment{proof}[1][Proof]{\textbf{#1.} }{\ \rule{0.5em}{0.5em}}

\newtheorem{theorem}{Theorem}[section]
\newtheorem{definition}[theorem]{Definition}
\newtheorem{lemma}[theorem]{Lemma}
\newtheorem{remark}[theorem]{Remark}
\newtheorem{proposition}[theorem]{Proposition}
\newtheorem{corollary}[theorem]{Corollary}
\newtheorem{example}[theorem]{Example}

\numberwithin{equation}{section}
\newcommand{\ep}{\varepsilon}
\newcommand{\R}{{\mathbb  R}}
\newcommand\C{{\mathbb  C}}
\newcommand\Q{{\mathbb Q}}
\newcommand\Z{{\mathbb Z}}
\newcommand{\N}{{\mathbb N}}

\newcommand{\bfi}{\bfseries\itshape}

\newsavebox{\savepar}
\newenvironment{boxit}{\begin{lrbox}{\savepar}
\begin{minipage}[b]{15.5cm}}{\end{minipage}\end{lrbox}
\fbox{\usebox{\savepar}}}

\title{{\bf A note on the structure of prescribed gradient--like domains of non--integrable vector fields}}
\author{R\u{a}zvan M. Tudoran}

\date{}
\maketitle \makeatother

\begin{abstract}
Given a geometric structure on $\mathbb{R}^{n}$ with $n$ even (e.g. Euclidean, symplectic, Minkowski, pseudo-Euclidean), we analyze the set of points inside the domain of definition of an arbitrary given $\mathcal{C}^1$ vector field, where the value of the vector field equals the value of the left/right gradient--like vector field of some fixed $\mathcal{C}^2$ potential function, although a non-integrability condition holds at each such a point. Particular examples of gradient--like vector fields include the class of gradient vector fields with respect to Euclidean or pseudo-Euclidean inner products, and the class of Hamiltonian vector fields associated to symplectic structures on $\mathbb{R}^{n}$ (with $n$ even). The main result of this article provides a geometric version of the main result of \cite{della}. 
\end{abstract}

\medskip

\textbf{MSC 2010}: 37C10; 47H60.

\textbf{Keywords}: geometric structures; gradient--like vector fields; non-integrability.

\section{Introduction}
\label{section:one}

One of the most important classes of vector fields on $\mathbb{R}^{n}$ is the class of \textit{gradient} vector fields. In order to be precise, recall that the notion of gradient vector field on $\mathbb{R}^{n}$ is associated to some Euclidean or pseudo-Euclidean inner product on $\mathbb{R}^{n}$ (or more generally, to some Riemannian or pseudo-Riemannain metric). More exactly, given an Euclidean or pseudo-Euclidean inner product $g:\mathbb{R}^{n}\times \mathbb{R}^{n}\rightarrow \mathbb{R}$ and a function $f\in\mathcal{C}^{1}(\Omega,\mathbb{R})$ (defined on some open set $\Omega\subseteq\mathbb{R}^{n}$), the gradient vector field $\nabla_{g}f\in\mathfrak{X}(\Omega)$ is uniquely defined by the relation
\begin{equation}\label{grf}
g(\nabla_{g}f(\mathbf{x}),\mathbf{v})=\mathrm{d}f(\mathbf{x})\cdot\mathbf{v}, ~ \forall \mathbf{x}\in \Omega,\forall\mathbf{v}\in T_{\mathbf{x}}\Omega\cong \mathbb{R}^{n}.
\end{equation}
A natural question regarding these vector fields is when a given $\mathcal{C}^{1}$ vector field $F\in\mathfrak{X}(\Omega)$ is the gradient of some potential function $f\in\mathcal{C}^{2}(\Omega,\mathbb{R})$, with respect to a fixed Euclidean or pseudo-Euclidean inner product? The answer to this question in the case when $\Omega$ is a contractible open set, is given by the Poincar\'e lemma which provides an integrability condition to be verified by $F$ at each point of $\Omega$. 

In this note we are concerned with a geometric extension of the main result of \cite{della} which roughly states that given a $\mathcal{C}^{1}$ vector field $F\in\mathfrak{X}(\Omega)$ defined on an open set $\Omega\subseteq\mathbb{R}^{2m}$, and a function $f\in\mathcal{C}^{2}(\Omega,\mathbb{R})$, the set of points $\mathbf{x}\in \Omega$ where $F(\mathbf{x})=\nabla f(\mathbf{x})$ although a natural non-integrability condition holds at each such a point, if not empty, is a Borel set covered by a finite family of regularly embedded $\mathcal{C}^{1}$ $m-$dimensional submanifolds of $\mathbb{R}^{2m}$; the symbol $\nabla$ stands for the gradient operator associated to the canonical Euclidean inner product on $\mathbb{R}^{2m}$. More precisely, we extend the above result in the context of gradient--like vector fields associated to a general geometric structure on $\mathbb{R}^{2m}$, where particular cases of gradient--like vector fields include the gradient vector fields associated to Euclidean or pseudo-Euclidean inner products, and also the Hamiltonian vector fields associated to symplectic forms on $\mathbb{R}^{2m}$. 

The structure of the article is the following. In the second section we recall some generalities about gradient--like vector fields associated to a geometric structure on $\mathbb{R}^{n}$, and provide an equivalent characterization of the integrability condition from the Poincar\'e lemma in the case of gradient--like vector fields. The third section contains the statement and the proof of the main result of this note, together with some particular cases.

\section{Gradient--like vector fields associated to a geometric structure}

We start this section by recalling from \cite{TDR} the definition of a \textit{geometric structure} on $\mathbb{R}^{n}$. More precisely, following \cite{TDR}, by \textit{geometric structure} on $\mathbb{R}^{n}$ we mean any arbitrary fixed nondegenerate real bilinear form on $\mathbb{R}^n$. From now on, the pair $(\mathbb{R}^{n},b)$, where $b$ is a geometric structure on $\mathbb{R}^n$, will be denoted by $\mathbb{R}^{n}_{b}$.

Note that Euclidean, symplectic, Minkowski, or pseudo-Euclidean structures, are all particular examples of geometric structures. Indeed, if $b$ is symmetric and positive definite, then $\mathbb{R}^{n}_{b}$ becomes an Euclidean vector space; if $n$ is even and $b$ is skew--symmetric, then $\mathbb{R}^{n}_{b}$ becomes a symplectic vector space; if $b$ is symmetric with the signature $(n-1,1)$, then $\mathbb{R}^{n}_{b}$ becomes a Minkowski vector space, or more generally, if $b$ is symmetric with the signature $(n-k,k)$, $k\neq 0$, then $\mathbb{R}^{n}_{b}$ becomes a pseudo-Euclidean vector space.

Let us recall now a result from \cite{TDR} which is a consequence of the nondegeneracy of geometric structures, and provides a compatibility relation between a general geometric structure on $\mathbb{R}^{n}$, and the canonical Euclidean inner product on $\mathbb{R}^{n}$. More precisely, for any arbitrary fixed geometric structure $b$ on $\mathbb{R}^{n}$, there exists a \textit{unique} invertible linear map $B\in\operatorname{Aut}(\mathbb{R}_{b}^{n})$ such that 
\begin{equation}\label{relb}
\langle \mathbf{x},\mathbf{y} \rangle = b(\mathbf{x}, B\mathbf{y}), ~~\forall \mathbf{x},\mathbf{y}\in\mathbb{R}^{n}_{b},
\end{equation}
where $\langle \cdot,\cdot \rangle$ stands for the canonical Euclidean inner product on $\mathbb{R}^{n}$. Following \cite{TDR}, a pair $(b,B)$ satisfying the relation \eqref{relb} is called a \textit{geometric pair} on $\mathbb{R}^{n}$.

Next, given a geometric pair $(b,B)$, the \textit{(left) adjoint} of $B$ with respect to the geometric structure $b$ is the unique invertible linear map, denoted by $B^{\star}\in\operatorname{Aut}(\mathbb{R}_{b}^{n})$, defined by the relation
\begin{equation*}
b(B^{\star}\mathbf{x}, \mathbf{y})= b(\mathbf{x}, B\mathbf{y}), ~~\forall \mathbf{x},\mathbf{y}\in\mathbb{R}^{n}_{b}.
\end{equation*}

Recall from \cite{TDR} that any geometric pair $(b,B)$ with $b$ symmetric or skew--symmetric, yields $\left(B^{\star}\right)^{\star}= B$. For a general geometric pair $(b,B)$ it could happen that $\left(B^{\star}\right)^{\star}\neq B$. Next result from \cite{TDR} recalls some basic properties of a geometric pair $(b,B)$.

\begin{proposition}\cite{TDR}\label{prp2}
Let $b$ be a geometric structure on $\mathbb{R}^{n}$, and let $(b,B)$ be the associated geometric pair. Then $B^{\star}= B^{\top}$, where $B^{\top}$ stands for the adjoint of $B$ with respect to $\langle\cdot,\cdot\rangle$. Moreover, if $b$ is symmetric (skew--symmetric) then $B^{\star}=B$ ($B^{\star}= - B$).
\end{proposition}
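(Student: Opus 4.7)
The plan is to exploit nondegeneracy of $b$ at every step: since $b$ is nondegenerate, any linear operator $A$ on $\mathbb{R}^n_b$ is uniquely determined by the values of $b(Ax,y)$ as $x,y$ range over $\mathbb{R}^n_b$, so one obtains identities like $A_1 = A_2$ by showing $b(A_1 x, y) = b(A_2 x, y)$ for all $x,y$. Both conclusions reduce to this one-line recipe once the correct right-hand side has been cooked up from the defining relation $\langle x, y\rangle = b(x, By)$.

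For the first assertion $B^\star = B^\top$, I would start from the Euclidean adjoint characterization $\langle B^\top x, y\rangle = \langle x, By\rangle$ and rewrite both sides via \eqref{relb}. The right-hand side is $\langle x, By\rangle = b(x, B(By)) = b(x, B(By))$, but a cleaner route is: using \eqref{relb}, $\langle B^\top x, y\rangle = b(B^\top x, By)$, while $\langle x, By\rangle$ can be handled after a substitution. In more detail, I would observe that applying \eqref{relb} with $y$ replaced by an arbitrary $z = By'$ (legitimate since $B \in \operatorname{Aut}(\mathbb{R}^n_b)$) yields $\langle x, z\rangle = b(x, Bz)$ for \emph{all} $z$, which together with $\langle B^\top x, y\rangle = \langle x, By\rangle$ and \eqref{relb} gives
\begin{equation*}
b(B^\top x, By) = \langle B^\top x, y\rangle = \langle x, By\rangle = b(x, B(By)).
\end{equation*}
Then I would run $By$ over all of $\mathbb{R}^n_b$ (again using invertibility of $B$) to conclude $b(B^\top x, w) = b(x, Bw)$ for all $x,w$. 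Comparing with the definition of $B^\star$ and invoking uniqueness (i.e.\ nondegeneracy of $b$) yields $B^\top = B^\star$.

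For the ``moreover'' part, I would combine \eqref{relb} with the symmetry of $\langle\cdot,\cdot\rangle$ to produce the symmetric identity
\begin{equation*}
b(x, By) = \langle x, y\rangle = \langle y, x\rangle = b(y, Bx), \qquad \forall\, x, y \in \mathbb{R}^n_b.
\end{equation*}
If $b$ is symmetric, then $b(y, Bx) = b(Bx, y)$, so $b(x, By) = b(Bx, y)$; comparing with the defining relation $b(B^\star x, y) = b(x, By)$ and using nondegeneracy of $b$ gives $B^\star = B$. If $b$ is skew-symmetric, the same computation yields $b(y, Bx) = -b(Bx, y)$, hence $b(B^\star x, y) = -b(Bx, y) = b(-Bx, y)$, and nondegeneracy gives $B^\star = -B$.

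The proof has no real obstacle; the only point that requires a bit of care is the substitution step in the first part, where one must use the invertibility of $B$ to let $By$ sweep out all of $\mathbb{R}^n_b$ before canceling $b$. Everything else is a mechanical application of the defining relations together with the fact that nondegenerate bilinear forms separate vectors.
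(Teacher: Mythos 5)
Your argument is correct: both parts follow exactly as you say from the defining relation $\langle \mathbf{x},\mathbf{y}\rangle=b(\mathbf{x},B\mathbf{y})$, the invertibility of $B$ (to let $B\mathbf{y}$ sweep out all of $\mathbb{R}^{n}_{b}$), and nondegeneracy of $b$ to cancel in the first slot. Note that the paper itself states this proposition without proof, citing \cite{TDR}; your verification is the natural one and fills that gap correctly, so there is nothing to flag.
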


In the following we recall from \cite{TDR} the definition of two gradient--like vector fields naturally associated to a given geometric structure. These vector fields generalize the notion of gradient vector field from the Euclidean geometry, Hamiltonian vector field from the symplectic geometry, and Minkowski gradient vector field from the Minkowski geometry.

\begin{definition}\cite{TDR}
Let $b$ be a geometric structure on $\mathbb{R}^n$, and let $\Omega\subseteq\mathbb{R}^{n}_{b}$ be an open set. Then for every $f\in\mathcal{C}^{1}(\Omega,\mathbb{R})$ we define the left--gradient of $f$, denoted by $\nabla^{L}_{b}f$, as the vector field uniquely defined by the relation
$$
b(\nabla^{L}_{b}f(\mathbf{x}),\mathbf{v})=\mathrm{d}f(\mathbf{x})\cdot \mathbf{v}, ~ \forall \mathbf{x}\in \Omega,\forall\mathbf{v}\in T_{\mathbf{x}}\Omega\cong \mathbb{R}^{n}_{b}.
$$
Similarly, the right--gradient of $f$, denoted by $\nabla^{R}_{b}f$, is the vector field uniquely defined by the relation
$$
b(\mathbf{v},\nabla^{R}_{b}f(\mathbf{x}))=\mathrm{d}f(\mathbf{x})\cdot \mathbf{v}, ~ \forall \mathbf{x}\in \Omega,\forall\mathbf{v}\in T_{\mathbf{x}}\Omega\cong \mathbb{R}^{n}_{b}.
$$
\end{definition}

Recall from \cite{TDR} that if $b$ is symmetric then $\nabla^{L}_{b}=\nabla^{R}_{b}$. Particularly, when $b$ is the canonical Euclidean inner product on $\mathbb{R}^n$, then $\nabla^{L}_{b}=\nabla^{R}_{b}=\nabla$, where $\nabla$ stands for the classical gradient operator on $\mathbb{R}^n$, defined for any function $f\in\mathcal{C}^{1}(\Omega,\mathbb{R})$ by the relation
\begin{equation}\label{cg}
\langle\nabla f(\mathbf{x}),\mathbf{v}\rangle=\mathrm{d}f(\mathbf{x})\cdot \mathbf{v}, ~ \forall \mathbf{x}\in\Omega,\forall\mathbf{v}\in T_{ \mathbf{x}}\Omega\cong \mathbb{R}^{n}_{b}.
\end{equation}

On the other hand, if $b$ is skew--symmetric then $\nabla^{L}_{b}= - \nabla^{R}_{b}$. For a general geometric structure $b$, the relation between the operators $\nabla^{L}_{b}$, $\nabla^{R}_{b}$ is given in the following result.

\begin{proposition}\cite{TDR}\label{rlgrd}
Let $b$ be a geometric structure on $\mathbb{R}^{n}$, and let $(b,B)$ be the associated geometric pair. Then for any fixed open set $\Omega\subseteq\mathbb{R}^{n}_{b}$, and for every $f\in\mathcal{C}^{1}(\Omega,\mathbb{R})$, we have that $\nabla^{L}_{b}f=B^{\star}\nabla f$, $\nabla^{R}_{b}f=B \nabla f$, and $\nabla^{L}_{b}f=B^{\star}B^{-1}\nabla^{R}_{b}f$.
\end{proposition}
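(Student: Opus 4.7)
The plan is to verify each of the three claimed identities directly from the defining relations of $\nabla^{L}_{b}f$, $\nabla^{R}_{b}f$, $\nabla f$, and the geometric pair $(b,B)$, using the nondegeneracy of $b$ at the end of each step to conclude pointwise equality of vector fields.

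First, for the identity $\nabla^{R}_{b}f = B\nabla f$, I would fix $\mathbf{x}\in\Omega$ and $\mathbf{v}\in\mathbb{R}^{n}_{b}$, and write
\[
b(\mathbf{v},\nabla^{R}_{b}f(\mathbf{x}))=\mathrm{d}f(\mathbf{x})\cdot \mathbf{v}=\langle\nabla f(\mathbf{x}),\mathbf{v}\rangle=\langle \mathbf{v},\nabla f(\mathbf{x})\rangle = b(\mathbf{v},B\nabla f(\mathbf{x})),
\]
where the third equality uses symmetry of the Euclidean inner product and the fourth uses \eqref{relb}. Since this holds for every $\mathbf{v}$, the nondegeneracy of $b$ yields $\nabla^{R}_{b}f(\mathbf{x}) = B\nabla f(\mathbf{x})$.

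Second, for $\nabla^{L}_{b}f = B^{\star}\nabla f$, I would similarly compute, for arbitrary $\mathbf{v}$,
\[
b(\nabla^{L}_{b}f(\mathbf{x}),\mathbf{v}) = \mathrm{d}f(\mathbf{x})\cdot\mathbf{v} = \langle\nabla f(\mathbf{x}),\mathbf{v}\rangle = b(\nabla f(\mathbf{x}),B\mathbf{v}) = b(B^{\star}\nabla f(\mathbf{x}),\mathbf{v}),
\]
using \eqref{relb} and then the defining property of the left adjoint $B^{\star}$. Nondegeneracy of $b$ in the first slot again gives $\nabla^{L}_{b}f(\mathbf{x}) = B^{\star}\nabla f(\mathbf{x})$.

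Finally, the relation $\nabla^{L}_{b}f = B^{\star}B^{-1}\nabla^{R}_{b}f$ follows immediately by substitution: since $B$ is invertible, $\nabla f = B^{-1}\nabla^{R}_{b}f$, and applying $B^{\star}$ to both sides gives the claim. The only subtle point in the whole argument is making sure the correct slot of $b$ is used when invoking \eqref{relb} and the definition of $B^{\star}$; there is no real obstacle, only bookkeeping.
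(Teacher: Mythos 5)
Your verification is correct: each identity follows exactly as you compute, by feeding the defining relations of $\nabla^{L}_{b}f$, $\nabla^{R}_{b}f$, $\nabla f$, the geometric pair relation \eqref{relb}, and the definition of $B^{\star}$ into one another and then using nondegeneracy of $b$ (which in finite dimensions holds in either slot). The paper itself only cites this proposition from \cite{TDR} without reproducing an argument, and your direct pointwise verification is precisely the expected proof, so there is nothing to add.
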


In the following we recall from \cite{TDR} the Poincar\'e lemma in the case of left/right--gradient vector fields associated to a general geometric structure on $\mathbb{R}^n$.

\begin{theorem}\label{Poincare}\cite{TDR}
Let $b$ be a geometric structure on $\mathbb{R}^{n}$, and let $(b,B)$ be the associated geometric pair. Let $F\in\mathfrak{X}(\Omega)$ be a $\mathcal{C}^{1}$ vector field defined on a contractible open set $\Omega\subseteq\mathbb{R}^{n}$. Then the following assertions hold true.
\begin{enumerate}
\item[(i)] $F=\nabla^{L}_{b}f$ for some $f\in\mathcal{C}^{2}(\Omega,\mathbb{R})$ if and only if
\begin{equation}\label{relunu}
(\mathrm{D}F(\mathbf{x}))^{\top}B^{-1}=(B^{\star})^{-1}\mathrm{D}F(\mathbf{x}), ~ \forall \mathbf{x}\in\Omega.
\end{equation}
\item[(ii)] $F=\nabla^{R}_{b}f$ for some $f\in\mathcal{C}^{2}(\Omega,\mathbb{R})$ if and only if
\begin{equation}\label{reldoi}
(\mathrm{D}F(\mathbf{x}))^{\top}(B^{\star})^{-1}=B^{-1}\mathrm{D}F(\mathbf{x}), ~ \forall \mathbf{x}\in\Omega.
\end{equation}
\item[(iii)] If $b$ is symmetric (i.e. $b$ is an Euclidean or pseudo-Euclidean inner product), then $F=\nabla^{L}_{b}f$ (or equivalently  $F=\nabla^{R}_{b}f$) for some $f\in\mathcal{C}^{2}(\Omega,\mathbb{R})$ if and only if
\begin{equation}\label{reltrei}
(\mathrm{D}F(\mathbf{x}))^{\top}B^{-1}=B^{-1}\mathrm{D}F(\mathbf{x}), ~ \forall \mathbf{x}\in\Omega.
\end{equation}
Here, $\nabla^{L}_{b}f$ (or equivalently $\nabla^{R}_{b}f$) is precisely the gradient of $f$ with respect to the inner product $b$, i.e. $\nabla^{L}_{b}f = \nabla^{R}_{b}f=\nabla_{b}f$.
\item[(iv)] If $n$ is even and $b$ is skew--symmetric (i.e. $b$ is a symplectic form), then $F=\nabla^{L}_{b}f$ (or equivalently  $-F=\nabla^{R}_{b}f$) for some $f\in\mathcal{C}^{2}(\Omega,\mathbb{R})$ if and only if
\begin{equation}\label{relpatru}
(\mathrm{D}F(\mathbf{x}))^{\top}B^{-1} + B^{-1}\mathrm{D}F(\mathbf{x}) = \mathbb{O}_{n}, ~ \forall \mathbf{x}\in\Omega.
\end{equation}
Here, $\nabla^{L}_{b}f$ (or equivalently $-\nabla^{R}_{b}f$) is precisely the Hamiltonian vector field of $f$ with respect to the symplectic form $b$, i.e. $\nabla^{L}_{b}f = -\nabla^{R}_{b}f= X_f$.
\end{enumerate}
\end{theorem}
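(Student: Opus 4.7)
The plan is to reduce all four assertions to the classical Poincaré lemma on the contractible open set $\Omega$, by invoking Proposition \ref{rlgrd} to rewrite $\nabla^L_b f = B^{\star} \nabla f$ and $\nabla^R_b f = B\nabla f$, and then using Proposition \ref{prp2} to identify the left adjoint $B^{\star}$ with the Euclidean transpose $B^{\top}$.

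For item (i), I would observe that $F = \nabla^L_b f$ is equivalent to $(B^{\star})^{-1} F = \nabla f$; the classical Poincaré lemma applied to the $\mathcal{C}^1$ vector field $(B^{\star})^{-1} F$ on the contractible set $\Omega$ then asserts that such an $f \in \mathcal{C}^2(\Omega,\R)$ exists if and only if the Jacobian $(B^{\star})^{-1}\mathrm{D}F(\mathbf{x})$ is a symmetric matrix at every $\mathbf{x}\in\Omega$. Transposing this equality of matrices and using $((B^{\star})^{-1})^{\top} = ((B^{\star})^{\top})^{-1} = B^{-1}$ — where $(B^{\star})^{\top} = (B^{\top})^{\top} = B$ by Proposition \ref{prp2} — produces exactly condition \eqref{relunu}. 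The same argument with $B$ in place of $B^{\star}$ handles item (ii): $F = B\nabla f$ amounts to $B^{-1}F = \nabla f$, whose Jacobian is symmetric iff $(\mathrm{D}F)^{\top}(B^{-1})^{\top} = B^{-1}\mathrm{D}F$, and $(B^{-1})^{\top} = (B^{\top})^{-1} = (B^{\star})^{-1}$ gives \eqref{reldoi}.

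Items (iii) and (iv) then follow as immediate specializations via Proposition \ref{prp2}: in the symmetric case $B^{\star} = B$ collapses \eqref{relunu} to \eqref{reltrei}, while in the skew-symmetric case $B^{\star} = -B$ converts \eqref{relunu} into $(\mathrm{D}F)^{\top}B^{-1} = -B^{-1}\mathrm{D}F$, i.e.\ \eqref{relpatru}; for (iv), the identity $\nabla^R_b f = -\nabla^L_b f$ in the skew-symmetric case explains the sign discrepancy in the stated equivalence. The interpretations of $\nabla_b f$ as the $b$-gradient and of $\nabla^L_b f$ as the Hamiltonian vector field $X_f$ are immediate from the defining relations in those special settings. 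There is no substantive obstacle here — the analytical content is wholly packaged inside the classical Poincaré lemma, and the only required bookkeeping is the transpose/inverse manipulation that recasts the symmetry of $(B^{\star})^{-1}\mathrm{D}F$ into the asymmetric-looking form appearing in \eqref{relunu}.
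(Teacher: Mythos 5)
Your proposal is correct and follows essentially the route the paper itself indicates: the paper imports this theorem from \cite{TDR} without reproving it, but its accompanying remark reduces items (i)--(ii) to the standard Poincar\'e lemma applied to the $1$--forms $\iota_{F}b$ and $\iota_{F}b^{op}$, and your argument is precisely the matrix--level version of that reduction, with the symmetry of $(B^{\star})^{-1}\mathrm{D}F$ (resp. $B^{-1}\mathrm{D}F$) recast via $B^{\star}=B^{\top}$ into \eqref{relunu} (resp. \eqref{reldoi}), and (iii)--(iv) obtained by specializing $B^{\star}=\pm B$.
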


Note that using differential forms, the first two items of Theorem \ref{Poincare} follow also from the standard Poincar\'e lemma applied to the $1-$forms $\alpha^{L}:=\iota_{F}b$ and $\alpha^{R}:=\iota_{F}b^{op}$, where $b^{op}(F(\mathbf{x}),\mathbf{v}):=b(\mathbf{v},F(\mathbf{x})), \forall \mathbf{x}\in\Omega, \forall\mathbf{v}\in T_{\mathbf{x}}\Omega\cong \mathbb{R}^{n}_{b} $. Hence, the relation \eqref{relunu} is equivalent to $\mathrm{d}\alpha^{L}=0$, and similarly, the relation \eqref{reldoi} is equivalent to $\mathrm{d}\alpha^{R}=0$.

Next we recall a definition from \cite{della} useful in order to provide another formulation of the integrability conditions \eqref{relunu} and \eqref{reldoi}.

\begin{definition}\cite{della}
Let $\{u_1,\dots, u_n\}\subset\mathbb{R}^{n}$ be an orthonormal basis of $\mathbb{R}^{n}$, and let $\Gamma :\operatorname{End}(\mathbb{R}^{n})\rightarrow \bigwedge^{2}\mathbb{R}^{n}$ be the operator given by
$$
\Gamma(M):=\sum_{i=1}^{n}(Mu_i)\wedge u_i,~ \forall M\in\operatorname{End}(\mathbb{R}^{n}).
$$
For $n=2m\neq 0$ one defines $\Gamma(M)^{m}:=\underbrace{\Gamma(M)\wedge\dots\wedge\Gamma(M)}_{m\  \mathrm{times}},~ \forall M\in\operatorname{End}(\mathbb{R}^{n}).$
\end{definition}

In \cite{della} it is proved that the operator $\Gamma$ is well defined (i.e. its formula is independent of the choice of orthonormal basis), and moreover, $\Gamma(M)=0$ if and only if $M^{\top}=M$, where $M^{\top}$ stands for the adjoint of $M$ with respect to the canonical Euclidean inner product $\langle\cdot,\cdot\rangle$ on $\mathbb{R}^n$. Using this characterization of self--adjoint operators (with respect to $\langle\cdot,\cdot\rangle$) and the Proposition \ref{prp2}, we obtain the following equivalent way to express the integrability conditions \eqref{relunu}, \eqref{reldoi}.

\begin{proposition}
Let $b$ be a geometric structure on $\mathbb{R}^{n}$, and let $(b,B)$ be the associated geometric pair. Let $F\in\mathfrak{X}(\Omega)$ be a $\mathcal{C}^{1}$ vector field defined on a contractible open set $\Omega\subseteq\mathbb{R}^{n}$. Then the following assertions hold true.
\begin{enumerate}
\item[(i)] The integrability condition \eqref{relunu} is equivalent to $\Gamma((B^{\star})^{-1}DF(\mathbf{x}))=0, ~\forall\mathbf{x}\in\Omega$.
\item[(ii)] The integrability condition \eqref{reldoi} is equivalent to $\Gamma(B^{-1}DF(\mathbf{x}))=0, ~\forall\mathbf{x}\in\Omega$.
\end{enumerate}
\end{proposition}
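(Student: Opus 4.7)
The plan is to reduce both statements to the already recalled characterization that $\Gamma(M)=0$ if and only if $M^{\top}=M$ (i.e.\ $M$ is self--adjoint with respect to the canonical Euclidean inner product $\langle\cdot,\cdot\rangle$ on $\mathbb{R}^{n}$), and then to rewrite the self--adjointness of the appropriate matrix as the integrability condition \eqref{relunu} or \eqref{reldoi} using Proposition \ref{prp2}.

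Concretely, for item (i) I would set $M:=(B^{\star})^{-1}\mathrm{D}F(\mathbf{x})$ and compute $M^{\top}=(\mathrm{D}F(\mathbf{x}))^{\top}\bigl((B^{\star})^{-1}\bigr)^{\top}=(\mathrm{D}F(\mathbf{x}))^{\top}\bigl((B^{\star})^{\top}\bigr)^{-1}$. Invoking Proposition \ref{prp2}, which asserts $B^{\star}=B^{\top}$, one has $(B^{\star})^{\top}=(B^{\top})^{\top}=B$, so $M^{\top}=(\mathrm{D}F(\mathbf{x}))^{\top}B^{-1}$. Therefore the identity $M^{\top}=M$ is exactly the integrability condition \eqref{relunu}, and by the quoted characterization of $\Gamma$ this is equivalent to $\Gamma\bigl((B^{\star})^{-1}\mathrm{D}F(\mathbf{x})\bigr)=0$ for every $\mathbf{x}\in\Omega$.

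For item (ii) I would proceed symmetrically with $M:=B^{-1}\mathrm{D}F(\mathbf{x})$. Then $M^{\top}=(\mathrm{D}F(\mathbf{x}))^{\top}(B^{-1})^{\top}=(\mathrm{D}F(\mathbf{x}))^{\top}(B^{\top})^{-1}=(\mathrm{D}F(\mathbf{x}))^{\top}(B^{\star})^{-1}$, again using Proposition \ref{prp2}. Hence $M^{\top}=M$ coincides with \eqref{reldoi}, and the equivalence with $\Gamma\bigl(B^{-1}\mathrm{D}F(\mathbf{x})\bigr)=0$ follows from the same characterization.

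No real obstacle is expected: the argument is purely algebraic and amounts to a transposition together with the identification $B^{\star}=B^{\top}$ supplied by Proposition \ref{prp2}; the only point to be slightly careful about is to apply $\Gamma$ pointwise in $\mathbf{x}$ and to keep track that $(B^{\star})^{-1}$ and $B^{-1}$ must stand on the correct side of $\mathrm{D}F(\mathbf{x})$ when taking the transpose, so that the resulting expressions match \eqref{relunu} and \eqref{reldoi} verbatim rather than up to a multiplication by $B$ or $B^{\star}$.
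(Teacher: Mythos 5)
Your argument is correct and is exactly the route the paper intends: it combines the quoted characterization $\Gamma(M)=0\iff M^{\top}=M$ from \cite{della} with the identity $B^{\star}=B^{\top}$ of Proposition \ref{prp2}, and your transposition computations reproduce \eqref{relunu} and \eqref{reldoi} verbatim. Nothing is missing.
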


\section{The main result and some particular cases}

In this section we analyze the effect of a natural non-integrability condition applied to a general $\mathcal{C}^{1}$ vector field defined on some open subset of $\mathbb{R}^{n}_{b}$, where $n$ is a nonzero even number, and $b$ is an arbitrary fixed geometric structure. Before doing that, let us point out that if $n=2m\neq 0$, then for any $M\in\operatorname{End}(\mathbb{R}^{n})$ with $\Gamma(M)=0$ we get $\Gamma(M)^{m}=0$. Hence, for any $M\in\operatorname{End}(\mathbb{R}^{n})$ with $\Gamma(M)^{m}\neq 0$ it follows that $\Gamma(M)\neq 0$. Let us state now the main result of this note, which is a geometric version of the main result of \cite{della}.

\begin{theorem}\label{MTHM}
Let $b$ be a geometric structure on $\mathbb{R}^{2m}$, and let $(b,B)$ be the associated geometric pair. Let $f\in\mathcal{C}^{2}(\Omega,\mathbb{R})$ and $F\in\mathcal{C}^{1}(\Omega,\mathbb{R}^{2m}_{b})$, where $\Omega$ is an open subset of $\mathbb{R}^{2m}_{b}$. Then each of the sets
\begin{align*}
A_{f,F}^{\star,L}:&=\{\mathbf{x}\in\Omega\mid \nabla^{L}_{b}f(\mathbf{x})=F(\mathbf{x}),~ \Gamma((B^{\star})^{-1}DF(\mathbf{x}))^{m}\neq 0\},\\
A_{f,F}^{\star,R}:&=\{\mathbf{x}\in\Omega\mid \nabla^{R}_{b}f(\mathbf{x})=F(\mathbf{x}),~ \Gamma(B^{-1}DF(\mathbf{x}))^{m}\neq 0\},
\end{align*} 
if not empty, is a Borel set which admits a cover consisting of at most $\binom{2m}{m}$ regularly embedded $\mathcal{C}^{1}$ $m-$dimensional submanifolds of $\mathbb{R}^{2m}_{b}$. Particularly, the Hausdorff dimension of $A_{f,F}^{\star,L}$ and $A_{f,F}^{\star,R}$ is less or equal to $m$.
\end{theorem}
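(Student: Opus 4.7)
The strategy is to reduce both statements to the Euclidean-gradient version established in \cite{della}. The key observation, supplied by Proposition \ref{rlgrd}, is that each of the equations $\nabla^{L}_{b}f=F$ and $\nabla^{R}_{b}f=F$ is equivalent to an ordinary Euclidean gradient equation $\nabla f = G$ for an auxiliary $\mathcal{C}^{1}$ vector field $G$ obtained from $F$ by a fixed invertible linear map, and the associated non-integrability condition transforms consistently under the same map.

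For $A_{f,F}^{\star,L}$, Proposition \ref{rlgrd} gives $\nabla^{L}_{b}f=B^{\star}\nabla f$, so $\nabla^{L}_{b}f(\mathbf{x})=F(\mathbf{x})$ is equivalent to $\nabla f(\mathbf{x})=(B^{\star})^{-1}F(\mathbf{x})$. Define the $\mathcal{C}^{1}$ auxiliary vector field $G:=(B^{\star})^{-1}F\in\mathfrak{X}(\Omega)$. Since $(B^{\star})^{-1}$ is linear, $DG(\mathbf{x})=(B^{\star})^{-1}DF(\mathbf{x})$, and hence $\Gamma((B^{\star})^{-1}DF(\mathbf{x}))^{m}\neq 0$ if and only if $\Gamma(DG(\mathbf{x}))^{m}\neq 0$. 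Therefore
$$
A_{f,F}^{\star,L}=\{\mathbf{x}\in\Omega\mid \nabla f(\mathbf{x})=G(\mathbf{x}),~\Gamma(DG(\mathbf{x}))^{m}\neq 0\},
$$
which is exactly the set treated by the main theorem of \cite{della} applied to the pair $(f,G)$. Invoking that theorem furnishes the Borel property and the cover by at most $\binom{2m}{m}$ regularly embedded $\mathcal{C}^{1}$ $m$-dimensional submanifolds of $\mathbb{R}^{2m}$. The right-gradient case $A_{f,F}^{\star,R}$ proceeds identically with $\nabla^{R}_{b}f=B\nabla f$ and the auxiliary vector field $H:=B^{-1}F$, whose differential is $B^{-1}DF$, so that $A_{f,F}^{\star,R}$ coincides with the Euclidean set for $(f,H)$ and the same theorem applies.

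The Hausdorff dimension bound is an immediate consequence: a regularly embedded $\mathcal{C}^{1}$ $m$-dimensional submanifold of $\mathbb{R}^{2m}$ has Hausdorff dimension $m$, and Hausdorff dimension is stable under finite (even countable) unions. The only task is to verify that the algebraic substitution $F\mapsto (B^{\star})^{-1}F$ (respectively $F\mapsto B^{-1}F$) intertwines \emph{simultaneously} the gradient-like equation and the non-integrability condition; this is completely mechanical once Proposition \ref{rlgrd} is in hand, and no genuine obstacle arises beyond bookkeeping, since all substantive content is packaged in the theorem of \cite{della}.
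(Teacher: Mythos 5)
Your proposal is correct and follows essentially the same route as the paper: rewrite $\nabla^{L}_{b}f=F$ and $\nabla^{R}_{b}f=F$ via Proposition \ref{rlgrd} as Euclidean gradient equations $\nabla f=(B^{\star})^{-1}F$ and $\nabla f=B^{-1}F$, observe that the linearity of $(B^{\star})^{-1}$ and $B^{-1}$ makes the non-integrability conditions match, and apply Delladio's theorem to the auxiliary fields. Your explicit remark that $DG(\mathbf{x})=(B^{\star})^{-1}DF(\mathbf{x})$ is exactly the (implicit) bookkeeping step in the paper's proof, so nothing is missing.
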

\begin{proof}
In order to prove the theorem we need the main result of \cite{della} which states that given  $f\in\mathcal{C}^{2}(\Omega,\mathbb{R})$ and $G\in\mathcal{C}^{1}(\Omega,\mathbb{R}^{2m})$, where $\Omega$ is an open subset of $\mathbb{R}^{2m}$, then 
$$
A_{f,G}^{\star}:=\{\mathbf{x}\in\Omega\mid \nabla f(\mathbf{x})=G(\mathbf{x}),~ \Gamma(DG(\mathbf{x}))^{m}\neq 0\},
$$
if not empty, is a Borel set covered by a finite family $\{\Sigma_{(\alpha_{1},\dots,\alpha_{m})}\mid \alpha_{1},\dots,\alpha_{m}\in\mathbb{N},~1\leq \alpha_1 < \dots < \alpha_{m}\leq 2m\}$ of $m-$dimensional regularly embedded $\mathcal{C}^{1}$ submanifolds of $\mathbb{R}^{2m}$, and in particular, its Hausdorff dimension is less or equal to $m$. Recall from \cite{della} that denoting $\nabla f - G:=(\Phi_1,\dots,\Phi_{2m})$, then for each $(\alpha_{1},\dots,\alpha_{m})\in\mathbb{N}^{m}$ such that $1\leq \alpha_1 < \dots < \alpha_{m}\leq 2m$, 
$$
\Sigma_{(\alpha_{1},\dots,\alpha_{m})}:=\{\mathbf{x}\in\Omega\mid \Phi_{(\alpha_{1},\dots,\alpha_{m})}(\mathbf{x})=0,~ \operatorname{rank}D\Phi_{(\alpha_{1},\dots,\alpha_{m})}(\mathbf{x})=m \},
$$
where $\Phi_{(\alpha_{1},\dots,\alpha_{m})}:=(\Phi_{\alpha_1},\dots,\Phi_{\alpha_m})$.

Let us recall now from Proposition \ref{rlgrd} that $\nabla^{L}_{b}f=B^{\star}\nabla f$, $\nabla^{R}_{b}f=B \nabla f$. Consequently, the sets $A_{f,F}^{\star,L}, A_{f,F}^{\star,R}$ become
\begin{align*}
A_{f,F}^{\star,L}:&=\{\mathbf{x}\in\Omega\mid \nabla f(\mathbf{x})=(B^{\star})^{-1} F(\mathbf{x}),~ \Gamma((B^{\star})^{-1}DF(\mathbf{x}))^{m}\neq 0\},\\
A_{f,F}^{\star,R}:&=\{\mathbf{x}\in\Omega\mid \nabla f(\mathbf{x})=B^{-1} F(\mathbf{x}),~ \Gamma(B^{-1}DF(\mathbf{x}))^{m}\neq 0\},
\end{align*} 
Now the conclusion follows by applying the main result of \cite{della} to $G=(B^{\star})^{-1}F$ and $G=B^{-1}F$, respectively.
\end{proof}

Let us recall that if $n=2m\neq 0$, a natural class of geometric structures on $\mathbb{R}^{n}$ consists of the symplectic structures. As the main result of this note, Theorem \ref{MTHM}, takes place on $\mathbb{R}^{2m}$, let us point out its connection with the class of symplectic structures on $\mathbb{R}^{2m}$.

\begin{theorem}
Let $b$ be a symplectic form on $\mathbb{R}^{2m}$, and let $(b,B)$ be the associated geometric pair. Let $f\in\mathcal{C}^{2}(\Omega,\mathbb{R})$ and $F\in\mathcal{C}^{1}(\Omega,\mathbb{R}^{2m}_{b})$, where $\Omega$ is an open subset of $\mathbb{R}^{2m}_{b}$. Then the set
\begin{align*}
A_{f,F}^{\star}:=\{\mathbf{x}\in\Omega\mid X_f(\mathbf{x})=F(\mathbf{x}),~ \Gamma(B^{-1}DF(\mathbf{x}))^{m}\neq 0\},
\end{align*} 
if not empty, is a Borel set which admits a cover consisting of at most $\binom{2m}{m}$ regularly embedded $\mathcal{C}^{1}$ $m-$dimensional submanifolds of $\mathbb{R}^{2m}_{b}$. Particularly, the Hausdorff dimension of $A_{f,F}^{\star}$ is less or equal to $m$.
\end{theorem}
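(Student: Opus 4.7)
The plan is to reduce this symplectic statement to Theorem \ref{MTHM} by identifying $A_{f,F}^{\star}$ with $A_{f,F}^{\star,L}$. First I would invoke Theorem \ref{Poincare}(iv), which tells us that for a symplectic (hence skew-symmetric) $b$ on $\mathbb{R}^{2m}$, the Hamiltonian vector field $X_f$ coincides with the left--gradient $\nabla^{L}_{b}f$. Therefore the condition $X_f(\mathbf{x})=F(\mathbf{x})$ appearing in the definition of $A_{f,F}^{\star}$ is literally the same as $\nabla^{L}_{b}f(\mathbf{x})=F(\mathbf{x})$.

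Next I would show that the non-integrability condition matches as well. Since $b$ is skew--symmetric, Proposition \ref{prp2} gives $B^{\star}=-B$, so $(B^{\star})^{-1}=-B^{-1}$. The operator $\Gamma$ is linear in its matrix argument (it is a finite sum of terms of the form $(Mu_i)\wedge u_i$), so for every $\mathbf{x}\in\Omega$,
\[
\Gamma((B^{\star})^{-1}DF(\mathbf{x}))=\Gamma(-B^{-1}DF(\mathbf{x}))=-\Gamma(B^{-1}DF(\mathbf{x})).
\]
Taking the $m$--fold wedge power yields $\Gamma((B^{\star})^{-1}DF(\mathbf{x}))^{m}=(-1)^{m}\,\Gamma(B^{-1}DF(\mathbf{x}))^{m}$, so the two non-integrability conditions $\Gamma((B^{\star})^{-1}DF(\mathbf{x}))^{m}\neq 0$ and $\Gamma(B^{-1}DF(\mathbf{x}))^{m}\neq 0$ are pointwise equivalent.

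Combining these two observations, we conclude $A_{f,F}^{\star}=A_{f,F}^{\star,L}$ as subsets of $\Omega$. The result then follows immediately from Theorem \ref{MTHM}: if the set is non--empty it is a Borel set admitting a cover by at most $\binom{2m}{m}$ regularly embedded $\mathcal{C}^{1}$ submanifolds of dimension $m$, and its Hausdorff dimension is at most $m$. There is essentially no obstacle here beyond correctly identifying the sign conventions ($B^{\star}=-B$ in the skew--symmetric case) and the linearity of $\Gamma$; the real work has already been done in proving Theorem \ref{MTHM}.
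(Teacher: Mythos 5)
Your proof is correct and follows essentially the same route as the paper: identify $X_f$ with $\nabla^{L}_{b}f$, use $B^{\star}=-B$ for skew-symmetric $b$, and apply Theorem \ref{MTHM}. You merely spell out the sign bookkeeping ($\Gamma((B^{\star})^{-1}DF(\mathbf{x}))^{m}=(-1)^{m}\Gamma(B^{-1}DF(\mathbf{x}))^{m}$) that the paper leaves implicit.
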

\begin{proof}
The proof follows directly from Theorem \ref{MTHM} taking into account that $b$ is skew--symmetric and hence $B^{\star}=-B$ and  $\nabla^{L}_{b}f = -\nabla^{R}_{b}f= X_f$, where $X_f$ is the Hamiltonian vector field of $f$ with respect to the symplectic form $b$.
\end{proof}

In the case when $b$ is an Euclidean or pseudo-Euclidean inner product on $\mathbb{R}^{n}$ (with $n$ a nonzero even number) the Theorem \ref{MTHM} becomes as follows (recall that in this case $\nabla^{L}_{b} = \nabla^{R}_{b}=:\nabla_{b}$).

\begin{theorem}\label{MT1}
Let $b$ be an Euclidean or pseudo-Euclidean inner product on $\mathbb{R}^{2m}$, and let $(b,B)$ be the associated geometric pair. Let $f\in\mathcal{C}^{2}(\Omega,\mathbb{R})$ and $F\in\mathcal{C}^{1}(\Omega,\mathbb{R}^{2m}_{b})$, where $\Omega$ is an open subset of $\mathbb{R}^{2m}_{b}$. Then the set
\begin{align*}
A_{f,F}^{\star}:=\{\mathbf{x}\in\Omega\mid \nabla_b f(\mathbf{x})=F(\mathbf{x}),~ \Gamma(B^{-1}DF(\mathbf{x}))^{m}\neq 0\},
\end{align*} 
if not empty, is a Borel set which admits a cover consisting of at most $\binom{2m}{m}$ regularly embedded $\mathcal{C}^{1}$ $m-$dimensional submanifolds of $\mathbb{R}^{2m}_{b}$. Particularly, the Hausdorff dimension of $A_{f,F}^{\star}$ is less or equal to $m$.
\end{theorem}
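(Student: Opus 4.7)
The plan is to derive this statement as a direct corollary of Theorem \ref{MTHM}, using the symmetry of $b$ to collapse the left and right gradients into a single object and to identify the two matrix conditions appearing in the hypothesis.

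First I would invoke Proposition \ref{prp2}: since $b$ is symmetric (either Euclidean or pseudo-Euclidean), we have $B^\star = B$. Combined with Proposition \ref{rlgrd}, this yields $\nabla^{L}_{b}f = B^\star \nabla f = B\nabla f = \nabla^{R}_{b}f$, and in accordance with the notation introduced in Theorem \ref{Poincare}(iii), this common object is precisely $\nabla_b f$. Hence the equation $\nabla_b f(\mathbf{x}) = F(\mathbf{x})$ in the definition of $A_{f,F}^{\star}$ matches simultaneously the defining equations of both $A_{f,F}^{\star,L}$ and $A_{f,F}^{\star,R}$ from Theorem \ref{MTHM}.

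Next, since $B^\star = B$ gives $(B^\star)^{-1} = B^{-1}$, the non-integrability condition $\Gamma((B^{\star})^{-1}DF(\mathbf{x}))^{m}\neq 0$ coincides with $\Gamma(B^{-1}DF(\mathbf{x}))^{m}\neq 0$. Consequently $A_{f,F}^{\star} = A_{f,F}^{\star,L} = A_{f,F}^{\star,R}$ as subsets of $\Omega$.

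The conclusion is then immediate: applying Theorem \ref{MTHM} to either of these sets produces the required Borel measurability, the cover by at most $\binom{2m}{m}$ regularly embedded $\mathcal{C}^{1}$ $m$-dimensional submanifolds of $\mathbb{R}^{2m}_{b}$, and the Hausdorff dimension bound. There is no genuine obstacle here — the only thing one must be careful about is to cite both Propositions \ref{prp2} and \ref{rlgrd} correctly so that the identification $\nabla^L_b f = \nabla^R_b f = \nabla_b f$ and the identity $(B^\star)^{-1} = B^{-1}$ are both justified by the symmetry of $b$, after which the statement reduces verbatim to a special case of Theorem \ref{MTHM}.
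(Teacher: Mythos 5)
Your argument is correct and is essentially the paper's own proof: both deduce $B^\star=B$ from the symmetry of $b$ via Proposition \ref{prp2}, identify $\nabla^L_b f=\nabla^R_b f=\nabla_b f$, and conclude that $A_{f,F}^{\star}$ coincides with the sets of Theorem \ref{MTHM}, from which the statement follows immediately. Your version merely spells out the identification $(B^\star)^{-1}=B^{-1}$ a bit more explicitly, which is fine.
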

\begin{proof}
The proof follows directly from Theorem \ref{MTHM} taking into account that $b$ is symmetric and hence $B^{\star}=B$ and  $\nabla^{L}_{b}f = \nabla^{R}_{b}f= \nabla_{b}f$.
\end{proof}

\begin{remark}
In Theorem \ref{MT1} if $b$ is the canonical Euclidean inner product on $\mathbb{R}^{2m}$, we get $B=\operatorname{Id}_{\mathbb{R}^{2m}}$, $\nabla_b=\nabla$, and consequently we recover the main result of \cite{della}.
\end{remark}


\bigskip
\bigskip

\noindent {\sc R.M. Tudoran}\\
West University of Timi\c soara\\
Faculty of Mathematics and Computer Science\\
Department of Mathematics\\
Blvd. Vasile P\^arvan, No. 4\\
300223 - Timi\c soara, Rom\^ania.\\
E-mail: {\sf razvan.tudoran@e-uvt.ro}\\
\medskip

\end{document}